\journal{Applied Mathematics and Computation}
\numberwithin{equation}{section}
\newtheorem{guess}{Theorem}
\newtheorem{uess}{Lemma}
\newtheorem{corollary}{Corollary}
\newtheorem{example}{Example}
\newtheorem{remark}{Remark}
\begin{document}

\begin{frontmatter}

\title{Stability conditions for scalar delay differential equations with a non-delay term} 

\author[label1]{Leonid Berezansky} 
\author[label2]{Elena Braverman}
\address[label1]{Department of Mathematics,
Ben-Gurion University of the Negev,
Beer-Sheva 84105, Israel}
%
\address[label2]{Department of Mathematics and Statistics, University of
Calgary,
2500 University Drive N.W., Calgary, AB T2N 1N4, Canada}


\begin{abstract}
The problem considered in the paper is exponential stability of linear equations
and global attractivity of nonlinear non-autonomous equations which include a non-delay term and one or more
delayed terms. 
First, we demonstrate that introducing a non-delay term with a non-negative coefficient 
can destroy stability of the delay equation. 
Next, sufficient exponential stability conditions 
for linear equations with concentrated or distributed delays and global attractivity conditions 
for nonlinear equations are obtained. The nonlinear results are applied to 
the Mackey-Glass model of  respiratory dynamics. 
\end{abstract}

\begin{keyword} 
Linear and nonlinear delay differential equations \sep  global asymptotic stability \sep  
Mackey-Glass equation of respiratory dynamics

{\bf AMS Subject Classification:} 34K25, 34K20, 92D25
\end{keyword}

\end{frontmatter}

\section{Introduction}

Stability of the autonomous delay differential equation
\begin{equation}\label{intro1}
\dot{x}(t)+b x(t-\tau)=0
\end{equation}
(the sharp asymptotic stability condition for $\tau>0$ is $0<b\tau < \pi/2$) and  
of the equation with a non-delay term
\begin{equation}\label{intro2}
\dot{x}(t)+ax(t)+b x(t-\tau)=0
\end{equation} 
was investigated in detail, and stability of (\ref{intro1}) implies stability of (\ref{intro2}) for any $a \geq 0$.

The equation
\begin{equation}\label{1}
\dot{x}(t)+ax(t)+b(t)x(h(t))=0,~ t\geq 0,
\end{equation}
where $a>0$ is a constant, $b$ is a locally essentially bounded nonnegative function, $h(t) \leq t$ is a delay 
function,
is a generalization of (\ref{intro2}) and also is a special case of the non-autonomous equation with two 
variable coefficients
\begin{equation}\label{intro3}
\dot{x}(t)+a(t)x(t)+b(t)x(h(t))=0,~ t\geq 0,~a(t) \geq 0.
\end{equation}
Let us note that, generally, asymptotic stability of the equation without the non-delay term
\begin{equation}\label{intro4}
\dot{x}(t)+b(t)x(h(t))=0,~ t\geq 0
\end{equation}
does not imply stability of (\ref{intro3}).

\begin{example}
\label{example1}
Consider equations (\ref{intro3}) and (\ref{intro4}) for $b(t)\equiv b>0$ and $h(t)=[t]$, where $[t]$ is the 
maximal integer not exceeding $t$. The equation 
\begin{equation}\label{intro5}
\dot{x}(t)+b x([t])=0, ~t \geq 0
\end{equation} 
is asymptotically stable for any $b$ satisfying $0<b<2$, since the solution on $[n,n+1]$ is 
$x(t)=x(n)[1-b(t-n)]$ which is a linear function on any $[n,n+1]$.
Thus $x(n)=(1-b)^nx(0)$ and $|x(n)| \leq \delta^n |x(0)|$, where $0<\delta = |1-b| <1$.

Let us choose $1.6<b<1.9$ and consider the equation

\begin{equation}\label{intro6}
\dot{x}(t)+a(t)x(t)+b x([t])=0, ~t \geq 0
\end{equation} 
with a periodic piecewise constant nonnegative function $a(t)$ with the period $T=1$.
If $a(t) \equiv \alpha$ on $[0, \varepsilon]$ for $0<\varepsilon<1$ then
$$ x(t) = \left( \frac{b}{\alpha} +1 \right)x(0) e^{-\alpha t} - \frac{b}{\alpha} x(0), ~t \in [0, \varepsilon].$$ 
Let us choose $\alpha=3b$ and $\varepsilon$ in such a way that $x(\varepsilon)=0$, i.e. $\varepsilon = 
\frac{1}{3b} \ln 
4$,
and
\begin{equation}
\label{a_def}
a(t)= \left\{ \begin{array}{ll} 3b, & n \leq t \leq n+\varepsilon, \\ 
0, & n+\varepsilon<t<n+1, \end{array} \right. 
\end{equation}
where $n \geq 0$ is an integer. 
For $1.6<b<1.9$ we have $0.24< \varepsilon < 0.29$, thus $|x(1)| = b|x(0)|(1- \varepsilon) >  1.136 |x(0)|$.
Further, $|x(n)| > 1.136^n |x(0)|$, which means that (\ref{intro6}) is unstable, 
while (\ref{intro5}) is asymptotically stable.
Fig.~\ref{figure1}, left, illustrates the solutions of (\ref{intro5}) and (\ref{intro6}) 
with $b=1.8$, $x(0)=1$, here $|x(n+1)| \approx 1.34 |x(n)|$ for (\ref{intro6}), so (\ref{intro6}) is unstable
while (\ref{intro5}) is stable.

It is also possible to construct an example of asymptotically stable equation (\protect{\ref{intro5}}) with $a(t)$ satisfying 
$\inf_{t>1} a(t)>0$ such that (\protect{\ref{intro6}}) is unstable. For example, consider 
\begin{equation}
\label{a_def1}
a(t)= \left\{ \begin{array}{ll} 3b, & n \leq t \leq n+\varepsilon, \\
0.5, & n+\varepsilon<t<n+1, \end{array} \right.     
\end{equation}
where $b=1.8$, $x(0)=1$. As previously, $x(t)=\frac{4}{3}x(n) e^{-\alpha (t-n)} - 
\frac{1}{3} x(n)$ on $[n,n+\varepsilon]$; the solution on $[n+\varepsilon,n+1]$ is
$x(t)=2bx(n)(e^{-0.5(t-n-\varepsilon)} -1)$ and $|x(n+1)| \approx 
1.12|x(n)|$ for (\ref{intro6}).
In this case $a(t) \geq 0.5$ for any $t$, and the solution is unstable and unbounded (see 
Fig.~\ref{figure1}, right), though the divergence is slower than in the case when $a$ is defined by 
(\ref{a_def}).

\begin{figure}[ht]
\centering
\includegraphics[height=.28\textheight]{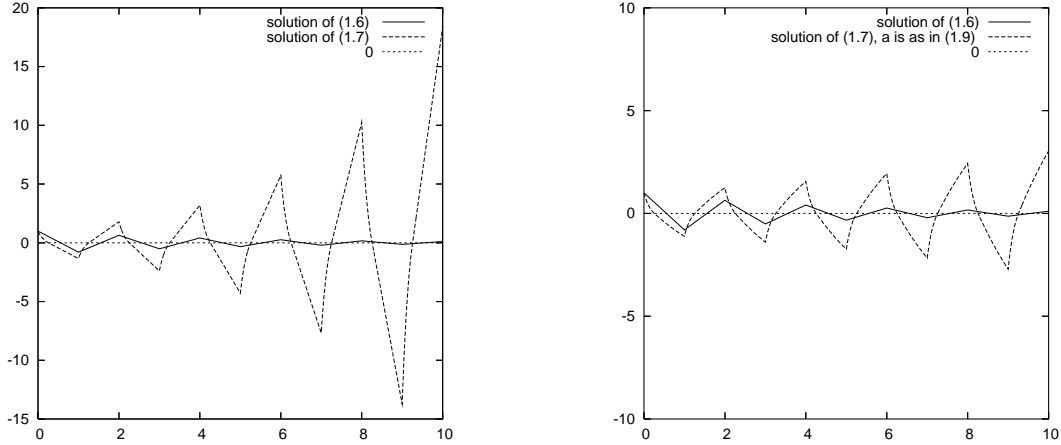} ~~~~~~~~~~
\includegraphics[height=.28\textheight]{example_1_a05.ps}
\caption{Solutions of equations (\protect{\ref{intro5}}) and (\protect{\ref{intro6}})
with $b=1.8$, $x(0)=1$, $\varepsilon \approx 0.256721$ in the case when $a$ is defined
by (\protect{\ref{a_def}}) and can vanish (left) and $a$ is described by (\protect{\ref{a_def1}}) 
and satisfies $a(t) \geq 0.5$ (right). All the solutions are oscillatory, 
(\protect{\ref{intro5}}) is exponentially stable, while (\protect{\ref{intro6}})
is unstable in both cases.}
\label{figure1}
\end{figure}

\end{example}

For scalar differential equation (\ref{1}), 
where $a>0$ is a constant, $b$ is a locally essentially bounded nonnegative function, $h(t) 
\leq t$ is a delay function,
the following result is a corollary of \cite[Theorem 2.9]{ILT}.

\begin{guess}\label{th1}
Suppose $0\leq b(t)\leq b$, $0\leq t-h(t)\leq h$ and the inequality 
\begin{equation}\label{2}
\frac{a}{b}e^{-ah}>\ln\frac{b^2+ab}{b^2+a^2}
\end{equation}
holds. Then equation (\ref{1}) is exponentially stable.
\end{guess}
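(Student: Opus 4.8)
The statement is presented as a specialization of \cite[Theorem 2.9]{ILT}, so the overall plan is to match equation~(\ref{1}) to the general equation treated there and then reduce its (typically integral, $\limsup$-type) stability condition to the explicit closed form~(\ref{2}). Before doing so I would first simplify~(\ref{1}) by removing the non-delay term through the substitution $x(t)=e^{-at}u(t)$. A direct computation gives
\[
\dot u(t)+b(t)\,e^{a(t-h(t))}\,u(h(t))=0,
\]
an equation of the form~(\ref{intro4}) with coefficient $\tilde b(t):=b(t)e^{a(t-h(t))}$, which under the hypotheses satisfies $0\le\tilde b(t)\le b\,e^{ah}$. Since the fundamental solutions are related by $X(t,s)=e^{-a(t-s)}U(t,s)$, exponential stability of~(\ref{1}) is equivalent to the fundamental solution $U(t,s)$ of the reduced equation growing strictly slower than $e^{a(t-s)}$; in particular, any uniform bound on $U$ already yields exponential stability of~(\ref{1}) with rate $a$.

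The second ingredient is a Bohl--Perron type argument. Writing~(\ref{1}) through variation of constants against its non-delay part,
\[
x(t)=e^{-at}x(0)-\int_0^t e^{-a(t-s)}b(s)\,x(h(s))\,ds,
\]
reduces stability to showing that the associated integral operator acts as a contraction on the space of bounded (or exponentially weighted) functions. A sharper version replaces $x(h(t))$ by $x(t)-\int_{h(t)}^t\dot x(s)\,ds$ and uses the original equation to rewrite~(\ref{1}) as
\[
\dot x(t)+\bigl(a+b(t)\bigr)x(t)=-b(t)\int_{h(t)}^t\bigl[a\,x(s)+b(s)\,x(h(s))\bigr]\,ds,
\]
whose generating equation $\dot y+(a+b(t))y=0$ is exponentially stable because $a+b(t)\ge a>0$. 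One then seeks an a priori bound $|x(t)|\le M e^{-\gamma t}$ and closes the estimate by bounding the integral perturbation and invoking the contraction principle.

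The main obstacle is recovering the precise constant in~(\ref{2}). Executing either route naively --- bounding $\tilde b$ by $be^{ah}$ and the delay length by $h$, or estimating the perturbation operator above by its supremum norm --- yields only a cruder, non-sharp condition of the form $b\,h\,(a+b)<\mathrm{const}$, with no logarithm in sight. The logarithmic factor $\ln\frac{b^2+ab}{b^2+a^2}$ and the factor $e^{-ah}$ in~(\ref{2}) signal that \cite[Theorem 2.9]{ILT} proceeds through a one-parameter family of exponential test functions (equivalently, a frozen-coefficient comparison with the autonomous majorant $\dot z+az+bz(t-h)=0$) and then optimizes over the free parameter; it is this optimization, carried out against the characteristic structure of the majorant, that produces the exponential and the logarithm. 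Thus the heart of the proof is not the reduction but the sharp parameter optimization, and the plan is to quote that optimization from \cite{ILT} after verifying that the hypotheses $0\le b(t)\le b$ and $0\le t-h(t)\le h$ place~(\ref{1}) within its scope.
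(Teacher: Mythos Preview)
The paper gives no proof of this theorem at all: it is stated purely as a corollary of \cite[Theorem~2.9]{ILT}, with no argument, reduction, or optimization carried out in the present paper. Your final plan---verify that the hypotheses $0\le b(t)\le b$, $0\le t-h(t)\le h$ place~(\ref{1}) within the scope of that reference and then quote the sharp condition---is therefore exactly what the paper does, and is correct.

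The preliminary material you sketch (the substitution $x(t)=e^{-at}u(t)$, the Bohl--Perron/variation-of-constants rewriting, the observation that naive bounds miss the logarithm) is all sound and gives useful intuition for why the constant in~(\ref{2}) has its particular form, but none of it appears in the paper and none of it is needed once you have decided to invoke \cite[Theorem~2.9]{ILT} directly. In short: your endpoint matches the paper's one-line ``proof''; the rest is commentary rather than a different route.
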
  

The aim of this paper is to extend Theorem~\ref{th1} to other classes of equations, 
including \eqref{intro3}, models with variable coefficients and several delays,
as well as with distributed delays. In Section 3 we consider nonlinear delay differential equations
and apply the results obtained to the Mackey-Glass model of respiratory dynamics in Section 4.

For other recent stability results, different from the results in the present paper,  for linear scalar delay differential 
equations see \cite{YS1,Y,Kz,SYC,GH1,GH2,GD,AS,Mal2,GP,BB,Pham,BB2007}
and in \cite{M, LP, GT, Gil1, KM, K, FO} for nonlinear equations.

\section{Linear Equations}

Consider the equation
\begin{equation}\label{3}
\dot{x}(t)+a(t)x(t)+b(t)x(h(t))=0, t\geq 0,
\end{equation}
under the following assumptions: 

(a1) $a,b$ are essentially bounded on $[0,\infty)$ Lebesgue measurable nonnegative functions;

(a2) $h$ is a Lebesgue measurable function, $h(t)\leq t, \lim_{t\rightarrow\infty}h(t) =\infty$. 

Together with (\ref{3}) consider the initial condition
\begin{equation}\label{4}
x(t)=\varphi(t),~ t\leq 0.
\end{equation}
We assume that

(a3) $\varphi$ is a Borel measurable bounded function.

The solution of problem~(\ref{3})-(\ref{4}) is an absolutely continuous on $[0,\infty)$ function satisfying (\ref{3}) almost 
everywhere for 
$t\geq 0$ and condition (\ref{4}) for $t \leq 0.$
Instead of the initial point $t_0=0$ we can consider any $t_0>0$.
\begin{guess}
\label{th2}
Suppose $a(t)\geq a_0>0$,  $b(t)\geq 0$,
\begin{equation}\label{5}
h_0 :=\limsup_{t\rightarrow\infty} \int_{h(t)}^t a(s) ds<\infty,
\end {equation}
and the inequality 
\begin{equation}\label{6}
\frac{1}{\beta}e^{-h_0}>\ln\frac{\beta^2+\beta}{\beta^2+1}
\end{equation}
holds, where
\begin{equation}\label{6a}
\beta := \limsup_{t\rightarrow\infty}\frac{b(t)}{a(t)} \,.
\end{equation}
Then equation (\ref{3}) is exponentially stable.
\end{guess}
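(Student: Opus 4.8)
The plan is to remove the variable non-delay coefficient by a change of the time variable, reducing \eqref{3} to an equation of the form \eqref{1} to which Theorem~\ref{th1} applies directly. First I would introduce the new independent variable
$$ s=\sigma(t):=\int_0^t a(\xi)\,d\xi . $$
Since $a(t)\geq a_0>0$, the map $\sigma$ is a strictly increasing, absolutely continuous bijection of $[0,\infty)$ onto $[0,\infty)$; writing $t=\sigma^{-1}(s)$ for its inverse and setting $y(s):=x(\sigma^{-1}(s))$, and using $\dot x(t)=a(t)\dot y(s)$, division of \eqref{3} by $a(t)$ gives
$$ \dot y(s)+y(s)+\tilde b(s)\,y(g(s))=0, $$
where $\tilde b(s):=\dfrac{b(\sigma^{-1}(s))}{a(\sigma^{-1}(s))}\geq 0$ and $g(s):=\displaystyle\int_0^{h(\sigma^{-1}(s))} a(\xi)\,d\xi$, so that $x(h(t))=y(g(s))$. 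This transformed equation has exactly the structure of \eqref{1} with non-delay coefficient equal to the \emph{constant} $1$.

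Next I would translate the hypotheses into the $s$-variable. Because $\sigma$ is a homeomorphism of $[0,\infty)$, limsups at infinity are preserved, so $\limsup_{s\to\infty}\tilde b(s)=\beta$; the transformed delay satisfies $s-g(s)=\int_{h(t)}^{t} a(\xi)\,d\xi$, whence $\limsup_{s\to\infty}(s-g(s))=h_0$; and $g(s)\le s$ with $g(s)\to\infty$ since $h(t)\to\infty$ and $a\geq a_0$. Since inequality \eqref{6} is strict, I can fix $\varepsilon>0$ so small that
$$ \frac{1}{\beta+\varepsilon}\,e^{-(h_0+\varepsilon)}>\ln\frac{(\beta+\varepsilon)^2+(\beta+\varepsilon)}{(\beta+\varepsilon)^2+1}, $$
and then choose $T$ with $\tilde b(s)\leq \beta+\varepsilon$ and $s-g(s)\leq h_0+\varepsilon$ for all $s\geq\sigma(T)$. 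On $[\sigma(T),\infty)$ the transformed equation is \eqref{1} with $a=1$, $b=\beta+\varepsilon$, $h=h_0+\varepsilon$, and the displayed inequality is precisely condition \eqref{2}; hence Theorem~\ref{th1} yields exponential stability of the $y$-equation.

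Finally I would pull the conclusion back. Exponential decay $|y(s)|\leq Ke^{-\gamma s}$ gives $|x(t)|=|y(\sigma(t))|\leq Ke^{-\gamma\sigma(t)}\leq Ke^{-\gamma a_0 t}$, using $\sigma(t)\geq a_0 t$, so \eqref{3} is exponentially stable. I expect the main technical care to lie in the passage from the limsup bounds to uniform bounds on $[\sigma(T),\infty)$, combined with the observation (already noted after \eqref{4}) that shifting the initial time to $T$ does not affect exponential stability. The lower bound $a_0>0$ plays a twofold role: it guarantees that $\sigma$ is a genuine time change, and it is exactly what converts exponential decay in $s$ into exponential decay in $t$.
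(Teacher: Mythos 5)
Your proposal is correct and is essentially the paper's own argument: the same time change $s=\int_0^t a(\xi)\,d\xi$ reducing \eqref{3} to an equation with constant non-delay coefficient $1$, the same passage from the strict limsup hypotheses \eqref{5}, \eqref{6}, \eqref{6a} to uniform bounds on a tail interval so that Theorem~\ref{th1} applies, and the same pull-back $|x(t)|\leq K e^{-\gamma\sigma(t)}\leq Ke^{-\gamma a_0 t}$. The only point the paper treats that you omit is the (routine) verification that the transformed coefficients and delay remain Lebesgue measurable, since $\sigma^{-1}$ is continuous and increasing.
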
  
\begin{proof} 
By (\ref{6}), with the notation introduced in (\ref{5}) and (\ref{6a}), there exists $t_0\geq 0$  such that the inequality
$$
\frac{1}{\beta}e^{-H}>\ln\frac{B^2+B}{B^2+1},
$$
holds, where
$$
H=\sup_{t\geq t_0} \int_{h(t)}^t a(s) ds,~
B = \sup_{t\geq t_0}\frac{b(t)}{a(t)}.
$$
Without loss of generality we can assume $t_0=0$.
After the substitution
$$
s=p(t)=\int_0^t a(\tau)d\tau,~ y(s)=x(t)
$$
(the function $p(t)$ is one-to-one since $a(t) \geq a_0>0$), 
equation (\ref{3}) has the form
\begin{equation}\label{7}
y'(s)+y(s)+\frac{b(p^{-1}(s))}{a(p^{-1}(s))}\, y(l(s))=0,
\end{equation}
where $l(s)=\int_0^{h(p^{-1}(s))} a(\tau)d\tau$. 
Moreover, the function $p(t)$ is monotone increasing and absolutely continuous,
therefore $p^{-1}(t)$ is also a continuous increasing function. Thus 
$h(p^{-1}(\cdot))$, $a(p^{-1}(\cdot))>0$ and $b(p^{-1}(\cdot))$
are Lebesgue measurable functions as compositions of a continuous and a Lebesgue measurable function.
Therefore the coefficients and the arguments in equation (\ref{7})
are Lebesgue measurable.
We have
$$
\frac{b(p^{-1}(s))}{a(p^{-1}(s))}=\frac{b(t)}{a(t)}\leq B,~~
s-l(s)=\int_{h(p^{-1}(s))}^{p^{-1}(s)} a(\tau)d\tau \leq H, ~~ s \geq p(t_0).
$$
By Theorem~\ref{th1}  equation~(\ref{7}) is exponentially stable. 
It means that there exist $M>0$ and $\alpha>0$ such that for any solution $y$
of equation~(\ref{7}) with the initial function $\varphi$  the inequality
$
|y(s)|\leq M\|\varphi\|e^{-\alpha s}
$ holds, where $\| \cdot \|$ is the sup-norm.
Thus for the solution $x(t)=y(s)$ of problem~(\ref{3}),(\ref{4}) we have
$$
|x(t)|\leq M\|\varphi\|e^{-\alpha \int_0^t a(\tau)d\tau}\leq M\|\varphi\|e^{-\alpha a_0 t}.
$$
Hence equation~(\ref{3}) is exponentially stable, which concludes the proof.
\end{proof}

Consider the equation with several delays
\begin{equation}\label{9}
\dot{x}(t)+a(t)x(t)+\sum_{k=1}^m b_k(t)x(h_k(t))=0,~~ t\geq 0,
\end{equation}
where for the functions $a, b_k, h_k $ conditions (a1)-(a2) hold.

\begin{guess}\label{th3}
Suppose $a(t)\geq a_0>0, b_k(t)\geq 0$, 
\begin{equation}\label{10}
h_0 :=\limsup_{t\rightarrow\infty} \int_{\min_k h_k(t)}^t a(s)ds<\infty,
\end {equation}
and inequality ~(\ref{6}) holds,
where $b(t)=\sum_{k=1}^m b_k(t)$, $\beta$ is defined in (\ref{6a}).

Then equation (\ref{9}) is exponentially stable.
\end{guess}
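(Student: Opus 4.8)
The plan is to reduce Theorem~\ref{th3} to the single-delay case (Theorem~\ref{th2}) by bundling the several delayed terms into one. The key observation is that the sum $\sum_{k=1}^m b_k(t) x(h_k(t))$ can be dominated, in the sense needed for the stability comparison, by a single delayed term whose coefficient is $b(t)=\sum_{k=1}^m b_k(t)$ and whose delay reaches back as far as the deepest of the $h_k$.

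First I would perform the same time-change substitution used in the proof of Theorem~\ref{th2}, namely $s=p(t)=\int_0^t a(\tau)\,d\tau$ and $y(s)=x(t)$, which is legitimate since $a(t)\geq a_0>0$ makes $p$ a strictly increasing absolutely continuous bijection. Under this substitution equation~(\ref{9}) becomes
\begin{equation*}
y'(s)+y(s)+\sum_{k=1}^m \frac{b_k(p^{-1}(s))}{a(p^{-1}(s))}\, y(l_k(s))=0,
\end{equation*}
where $l_k(s)=\int_0^{h_k(p^{-1}(s))} a(\tau)\,d\tau$. The hypotheses (\ref{10}) and (\ref{6a}) translate, exactly as before, into the uniform bounds $\sum_k \frac{b_k(p^{-1}(s))}{a(p^{-1}(s))}=\frac{b(t)}{a(t)}\leq B$ and $s-l_k(s)=\int_{h_k(p^{-1}(s))}^{p^{-1}(s)} a(\tau)\,d\tau \leq H$ for all $k$ and all sufficiently large $s$, where $B$ and $H$ are chosen as in the proof of Theorem~\ref{th2} so that $\frac{1}{B}e^{-H}>\ln\frac{B^2+B}{B^2+1}$. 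Thus it suffices to establish exponential stability for the normalized several-delay equation
\begin{equation*}
y'(s)+y(s)+\sum_{k=1}^m B_k(s)\, y(l_k(s))=0, \qquad \sum_k B_k(s)\leq B,\quad s-l_k(s)\leq H.
\end{equation*}

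The main obstacle is that Theorem~\ref{th1}, as stated, is a single-delay result, so I cannot apply it directly to the sum. The cleanest route is to invoke the general multi-delay stability theorem from \cite{ILT} (the same source from which Theorem~\ref{th1} is derived as a corollary), whose hypotheses are exactly the uniform bounds just obtained on the total coefficient $\sum_k B_k$ and on each individual delay length $s-l_k(s)\leq H$; inequality~(\ref{6}) in the form $\frac{1}{B}e^{-H}>\ln\frac{B^2+B}{B^2+1}$ is precisely the threshold guaranteeing exponential stability there. I would state explicitly that Theorem~\ref{th1} is the $m=1$ specialization of this more general result and that the multi-delay version requires only the sum of the coefficients and the common delay bound, so the argument is structurally identical.

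Finally, having obtained the exponential estimate $|y(s)|\leq M\|\varphi\|e^{-\alpha s}$ for the transformed equation, I would transfer it back to the original time variable exactly as in Theorem~\ref{th2}: since $s=\int_0^t a(\tau)\,d\tau\geq a_0 t$, we get
\begin{equation*}
|x(t)|\leq M\|\varphi\|e^{-\alpha\int_0^t a(\tau)\,d\tau}\leq M\|\varphi\|e^{-\alpha a_0 t},
\end{equation*}
which establishes exponential stability of~(\ref{9}) and completes the proof. The only genuinely new ingredient beyond Theorem~\ref{th2} is the passage from one delay to several, and because the stability condition depends on the delayed terms only through the aggregate coefficient $b=\sum_k b_k$ and the worst-case delay $\min_k h_k$, this passage costs essentially nothing.
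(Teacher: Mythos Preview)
Your argument is sound provided \cite[Theorem~2.9]{ILT} is indeed stated for several delays with the threshold depending only on the total coefficient and the maximal delay; you assert this but do not verify it, so the proof leans on the exact form of an external result that the paper only quotes in its single-delay specialization (Theorem~\ref{th1}).

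The paper takes a different and more self-contained route: rather than appealing to a multi-delay theorem from \cite{ILT}, it reduces equation~(\ref{9}) to a genuine single-delay equation of type~(\ref{3}) for each fixed solution. Since $\sum_k b_k(t)x(h_k(t))/\sum_k b_k(t)$ is a convex combination of the values $x(h_k(t))$, the intermediate value theorem (via \cite[Lemma~5]{BB1}) produces a point $h(t)\in[\min_k h_k(t),t]$ with $x(h(t))$ equal to this average; after checking that $h$ can be chosen Lebesgue measurable, the solution $x$ satisfies~(\ref{3}) with coefficient $b(t)=\sum_k b_k(t)$ and delay $h(t)$, and Theorem~\ref{th2} applies directly. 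Your approach is shorter if the cited result is as general as you claim, while the paper's approach trades that dependence for an explicit measurable-selection argument and uses only what has already been established within the paper.
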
  
\begin{proof}
Suppose $x$ is a solution of equation (\ref{9}).
The functions defined as
\begin{equation}\label{10abc}
\underline{h}(t):=\min_{1 \leq k \leq m} h_k(t), ~~~
u(t):=\left. \sum_{k=1}^m b_k(t)x(h_k(t))\right/ \sum_{k=1}^m b_k(t)
\end {equation}
are both Lebesgue measurable.  
Define
\begin{equation}\label{10ab}
h(t)=\inf_{s \in [\underline{h}(t),t]} \{ s| x(s) =u(t) \},
\end {equation}
the fact that the set $ \{ s\in [\underline{h}(t),t]| x(s) =u(t) \}$  is non-empty was justified
in  \cite[Lemma 5]{BB1}.
Further, let us notice that for any $C>0$ and $u$, $h$ defined in (\ref{10abc}) and (\ref{10ab}), respectively,
the set $\{t | h(t) \leq C\}$ has the form
$$
\{t | h(t) \leq C\}=\left\{ t \left| \max_{s \in [\underline{h}(t),C]} x(s) \geq u(t)~~\mbox{or~~~} t \leq C \right. \right\}
= \left\{ t \left| \max_{s \in [\underline{h}(t),C]} x(s) \geq u(t) \right. \right\} \cup [0,C].
$$
Since $x:[0,\infty) \to {\mathbb R}$ is continuous and $\underline{h}(t)$ is measurable, the function
$\displaystyle \max_{s \in [\underline{h}(t),C]} x(s)$ is a Lebesgue measurable function of $t$. Therefore
the set $ \left\{ t \left| \max_{s \in [\underline{h}(t),C]} x(s) \geq u(t) \right. \right\}$
is measurable for any $C$, which by definition implies that $h$ is measurable.
Since $u(t)=x(h(t))$ then $x$ is a solution of  equation~(\ref{3}) with nonnegative measurable coefficients and a 
measurable delay which is  exponentially stable by Theorem~\ref{th2}.
Thus equation (\ref{9}) is also exponentially stable.
\end{proof}

Consider now the equation with a distributed delay
\begin{equation}\label{11}
\dot{x}(t)+a(t)x(t)+\sum_{k=1}^m b_k(t)\int_{h_k(t)}^t x(s)d_s R_k(t,s)=0,
\end{equation}
where for $a,b_k,h_k$ conditions (a1)-(a2) hold, $\varphi$ in (\ref{4}) is continuous and

(a4) $R_k(t,s)$ are nondecreasing in $s$ for almost all $t$ and $\int_{0}^t d_s R_k(t,s)\equiv 1$, $k=1, \dots, 
m$.

\begin{guess}\label{th4}
Suppose $a(t)\geq a_0>0, b_k(t)\geq 0$, conditions (\ref{10}) and (\ref{6}) hold, 
where $b(t)=\sum_{k=1}^m b_k(t)$, $\beta$ is defined in (\ref{6a}).
Then equation (\ref{11}) is exponentially stable.
\end{guess}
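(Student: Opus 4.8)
The plan is to reduce the distributed-delay equation \eqref{11} to the single-delay equation \eqref{3} already handled by Theorem~\ref{th2}, following exactly the same device used in the proof of Theorem~\ref{th3}. The key observation is that each distributed term is a weighted average of $x$ over the interval $[h_k(t),t]$, and since the weights $d_s R_k(t,s)$ are nonnegative (by (a4), $R_k$ is nondecreasing in $s$) and the total mass of all terms combined is a convex-combination weighting, the entire delayed contribution $\sum_{k=1}^m b_k(t)\int_{h_k(t)}^t x(s)\,d_s R_k(t,s)$ can be written as $b(t)\,u(t)$, where $b(t)=\sum_{k=1}^m b_k(t)$ and $u(t)$ is a weighted average of values of $x(s)$ taken over $s\in[\underline{h}(t),t]$ with $\underline{h}(t):=\min_k h_k(t)$.

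First I would set $u(t):=\left.\sum_{k=1}^m b_k(t)\int_{h_k(t)}^t x(s)\,d_s R_k(t,s)\right/\sum_{k=1}^m b_k(t)$ and argue that $u(t)$ lies between $\min_{s\in[\underline{h}(t),t]}x(s)$ and $\max_{s\in[\underline{h}(t),t]}x(s)$, because it is a convex combination (total weight one) of continuously-varying values $x(s)$. Then, since $x$ is continuous on $[\underline{h}(t),t]$, the intermediate value theorem guarantees there exists $s\in[\underline{h}(t),t]$ with $x(s)=u(t)$; this plays the role of the non-emptiness statement cited from \cite[Lemma 5]{BB1} in the proof of Theorem~\ref{th3}. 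I would then define the delay exactly as in \eqref{10ab}, namely $h(t)=\inf\{\,s\in[\underline{h}(t),t]\mid x(s)=u(t)\,\}$, so that $x(h(t))=u(t)$ and equation \eqref{11} becomes \eqref{3} with coefficient $b(t)=\sum_{k=1}^m b_k(t)$ and this constructed delay $h$.

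The measurability verification is the one place requiring genuine care, and it is the step I expect to be the main obstacle. I would first check that $u$ is Lebesgue measurable: the map $t\mapsto\int_{h_k(t)}^t x(s)\,d_s R_k(t,s)$ is measurable because $x$ is continuous and $R_k(t,s)$ is measurable in $t$ and of bounded variation in $s$, so the Stieltjes integral depends measurably on the parameter $t$; dividing by the measurable, positive-valued (on its support) denominator preserves measurability. Next, for the delay $h$, I would reproduce verbatim the argument from Theorem~\ref{th3}: for any $C>0$ the superlevel description
$$
\{t\mid h(t)\leq C\}=\left\{t\left|\max_{s\in[\underline{h}(t),C]}x(s)\geq u(t)\right.\right\}\cup[0,C]
$$
holds, and since $t\mapsto\max_{s\in[\underline{h}(t),C]}x(s)$ is measurable (as $x$ is continuous and $\underline{h}$ is measurable), the set on the right is measurable, which by definition makes $h$ measurable. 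The one subtlety relative to Theorem~\ref{th3} is that the denominator $\sum_k b_k(t)$ may vanish; on the set where it vanishes the delayed term is zero and one may simply set $h(t)=t$ there without affecting the equation, so this causes no difficulty.

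With $h$ measurable and satisfying $h(t)\leq t$ together with $t-h(t)\leq t-\underline{h}(t)$, the hypotheses \eqref{10} and \eqref{6} transfer unchanged: $\limsup_{t\to\infty}\int_{h(t)}^t a(s)\,ds\leq\limsup_{t\to\infty}\int_{\underline{h}(t)}^t a(s)\,ds=h_0<\infty$, and $\beta=\limsup_{t\to\infty}b(t)/a(t)$ is the same quantity appearing in \eqref{6}. Hence $x$ solves an equation of the form \eqref{3} to which Theorem~\ref{th2} applies, giving exponential stability of \eqref{11}. I would close by noting that $\lim_{t\to\infty}h(t)=\infty$ follows from $\lim_{t\to\infty}\underline{h}(t)=\infty$ (a consequence of (a2) applied to each $h_k$), so assumption (a2) is genuinely inherited by the constructed single-delay equation.
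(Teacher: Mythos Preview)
Your proposal is correct and follows essentially the same approach as the paper: reduce the distributed-delay equation to a single-delay equation of form \eqref{3} and then invoke Theorem~\ref{th2}. The only cosmetic difference is that the paper outsources the existence of the equivalent delay $g(t)\in[\underline{h}(t),t]$ to \cite[Theorem~9]{BB2} and then remarks that measurability is checked as in Theorem~\ref{th3}, whereas you reproduce that reduction explicitly via the convex-combination/intermediate-value argument and carry out the measurability verification in full.
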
  
\begin{proof}
Suppose $x$ is a solution of equation (\ref{11}).
By \cite[Theorem 9]{BB2}, there exists a function $g(t)\leq t$ such that 
$\min_{1 \leq k \leq m} h_k(t)\leq g(t)\leq t$ 
and any solution of (\ref{11}) is also a solution of the equation
\begin{equation}\label{12}
\dot{y}(t)+a(t)y(t)+ \left( \sum_{k=1}^m b_k(t) \right) y(g(t))=0.
\end{equation}
The fact that $g(t)$ can be chosen as a Lebesgue measurable function is verified similarly to
the proof of Theorem~\ref{th3}.
By Theorem~\ref{th2} equation (\ref{12}) and thus equation (\ref{11}) are exponentially stable.
\end{proof}

Consider now the integro-differential equation
\begin{equation}\label{13}
\dot{x}(t)+a(t)x(t)+\sum_{l=1}^m b_l(t)\int_{h_l(t)}^t K_l(t,s) x(s)ds =0,
\end{equation}
where for $a,b_l,h_l$ conditions (a1)-(a2) hold and 

(a5) $K_l(t,s)\geq 0$ are essentially bounded and $\int_{h_l(t)}^t K_l(t,s)ds \equiv 1$, $l=1, \dots, m$.

\begin{corollary}
\label{cor1}
Suppose $a(t)\geq a_0>0, b_l(t)\geq 0$, conditions (\ref{10}) and (\ref{6}) hold, 
where $b(t)=\sum_{l=1}^m b_l(t)$, $\beta$ is defined in (\ref{6a}).
Then equation (\ref{13}) is exponentially stable.
\end{corollary}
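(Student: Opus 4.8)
The plan is to recognize that Corollary~\ref{cor1} is the integro-differential analogue of Theorem~\ref{th4}, and that the integral term $\int_{h_l(t)}^t K_l(t,s)x(s)\,ds$ is simply a distributed delay of the type treated in \eqref{11}. So the natural strategy is to rewrite \eqref{13} as a special case of \eqref{11} and invoke Theorem~\ref{th4} directly, rather than reproving anything from scratch.

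First I would define, for each $l$, the measure $R_l(t,\cdot)$ via $d_s R_l(t,s) = K_l(t,s)\,ds$ on $[h_l(t),t]$. Because condition (a5) stipulates $K_l(t,s)\geq 0$ and $\int_{h_l(t)}^t K_l(t,s)\,ds \equiv 1$, this $R_l$ is nondecreasing in $s$ and satisfies the normalization $\int_0^t d_s R_l(t,s)\equiv 1$ (extending $R_l$ by a constant on $[0,h_l(t)]$, which contributes nothing since the kernel is supported on $[h_l(t),t]$). Hence condition (a4) holds for these $R_l$. With this identification, $\int_{h_l(t)}^t K_l(t,s)x(s)\,ds = \int_{h_l(t)}^t x(s)\,d_s R_l(t,s)$, so equation \eqref{13} is literally equation \eqref{11} with $m$ terms, the same $a$, $b_l$, $h_l$, and $b(t)=\sum_{l=1}^m b_l(t)$.

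Next I would verify that the hypotheses of Theorem~\ref{th4} are met: $a(t)\geq a_0>0$ and $b_l(t)\geq 0$ are assumed; conditions \eqref{10} and \eqref{6} with $b(t)=\sum_{l=1}^m b_l(t)$ and $\beta$ from \eqref{6a} are assumed verbatim in the corollary; and (a1)--(a2) for $a,b_l,h_l$ are inherited. The only genuine checkpoint is confirming that (a5) indeed implies (a4) for the induced $R_l$, which is the boundary between the two formulations and the one place where a careless reader might object. Once that is in hand, Theorem~\ref{th4} applies to \eqref{13} and yields exponential stability immediately.

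I do not expect a serious obstacle here, since the corollary is deliberately structured to fall under Theorem~\ref{th4}. The only subtlety worth a sentence is measurability and the proper interpretation of the Riemann--Stieltjes integral as a Lebesgue integral against the absolutely continuous measure $K_l(t,s)\,ds$; both are routine given that $K_l$ is essentially bounded and nonnegative. Thus the proof is essentially a one-line reduction: exhibit \eqref{13} as an instance of \eqref{11} and cite Theorem~\ref{th4}.
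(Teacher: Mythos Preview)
Your proposal is correct and is precisely the intended argument: the paper states Corollary~\ref{cor1} without proof, as an immediate consequence of Theorem~\ref{th4}, and your reduction---defining $R_l(t,s)=\int_{h_l(t)}^{s}K_l(t,\tau)\,d\tau$ so that (a5) yields (a4)---is exactly the one-line verification that justifies the word ``corollary.''
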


\section{Nonlinear Equations}

Consider now the nonlinear equation
\begin{equation}\label{14}
\dot{x}(t)+f(t,x(t))+\sum_{k=1}^m g_k(t, x(h_k(t)))=0
\end{equation}
with initial condition  (\ref{4}), where everywhere in this section we assume that the functions $h_k$,
$k=1, \dots, m$, satisfy (a2), (a3) and the following conditions hold:

(a6) $f(t,u)$, $g_k(t,u)$ are continuous, $f(t,0)=g_k(t,0)=0$, $f(t,u)u>0$, $g_k(t,u)u>0$ for any $u\neq 0$ and
$k=1, \dots, m$;

(a7) there exist $x_1^0,x_2^0,x_1,x_2$, where $-\infty\leq x_1^0\leq 0\leq x_2^0\leq \infty$ and $-\infty< x_1\leq 0\leq 
x_2< \infty$ such that 
for any $x_1^0\leq \varphi\leq x_2^0$ there exists the unique global solution $x$ of problem (\ref{14}), (\ref{4}), and
it satisfies $x_1\leq x(t)\leq x_2$.

\begin{guess}\label{th5}
Suppose that there exist positive numbers $a_0$,$A$,$b_k$, $k=1, \dots, m$ such that for any $x_1\leq u\leq x_2, 
u\neq 0$ we have
$$
a_0\leq \frac{f(t,u)}{u}\leq A,~ 0\leq \frac{g_k(t,u)}{u}\leq b_k.
$$ 
Assume also that $t-h_k(t)\leq h_0, b_0=\sum_{k=1}^m b_k$ and
\begin{equation}\label{15}
\frac{a_0}{b_0}e^{-A h_0}>\ln\frac{b_0^2+a_0 b_0}{b_0^2+a_0^2}.
\end{equation}
Then all solutions of problem (\ref{14}),  (\ref{4}) with  $x_1^0\leq \varphi\leq x_2^0$ converge to zero.
\end{guess}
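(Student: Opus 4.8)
The plan is to freeze the nonlinearities along a fixed solution, converting \eqref{14} into a linear equation of the form \eqref{9}, and then to invoke Theorem~\ref{th3}. Fix an initial function $\varphi$ with $x_1^0\le\varphi\le x_2^0$ and let $x$ be the corresponding global solution; by (a7) it satisfies $x_1\le x(t)\le x_2$ for all $t$. Since $h_k(t)\to\infty$, I would first pick $t_0$ so large that $h_k(t)\ge 0$ for all $t\ge t_0$ and $1\le k\le m$, and work on $[t_0,\infty)$ (legitimate, since the paper allows an arbitrary initial point and convergence to zero is an asymptotic property); for $t\ge t_0$ every argument $x(h_k(t))$ then lies in $[x_1,x_2]$, where the one-sided bounds on $f$ and $g_k$ are available.

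Next I would set
\[
a(t)=\begin{cases}\dfrac{f(t,x(t))}{x(t)}, & x(t)\neq0,\\[2mm] a_0, & x(t)=0,\end{cases}
\qquad
b_k(t)=\begin{cases}\dfrac{g_k(t,x(h_k(t)))}{x(h_k(t))}, & x(h_k(t))\neq0,\\[2mm] 0, & x(h_k(t))=0.\end{cases}
\]
By (a6) the values chosen on the zero sets are irrelevant because the corresponding terms vanish there, so $f(t,x(t))=a(t)x(t)$ and $g_k(t,x(h_k(t)))=b_k(t)x(h_k(t))$ hold identically, and hence $x$ solves \eqref{9} with these coefficients. The assumed bounds give $a_0\le a(t)\le A$ and $0\le b_k(t)\le b_k$ for $t\ge t_0$. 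Measurability of $a$ and $b_k$ I would establish exactly as in the proof of Theorem~\ref{th3}: $x$ is continuous and each $h_k$ measurable, so $t\mapsto f(t,x(t))$ and $t\mapsto g_k(t,x(h_k(t)))$ are measurable, the sets $\{x(t)=0\}$ and $\{x(h_k(t))=0\}$ are measurable, and the quotients are measurable.

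It remains to check the hypotheses of Theorem~\ref{th3}. From $a(t)\le A$ and $t-h_k(t)\le h_0$ we get $\int_{\min_k h_k(t)}^{t}a(s)\,ds\le Ah_0$, so the quantity in \eqref{10} is finite; from $\sum_k b_k(t)\le b_0$ and $a(t)\ge a_0$ we get $\beta\le b_0/a_0$ in \eqref{6a}. Observe that \eqref{15} is exactly \eqref{6} evaluated at the corner values $\beta=b_0/a_0$ and $\int\cdots=Ah_0$. So the crux is to show that \eqref{6} is preserved when these are replaced by the smaller actual values, i.e. that the set of $(\beta,H)$ with $\frac1\beta e^{-H}>\ln\frac{\beta^2+\beta}{\beta^2+1}$ is closed downward in each variable. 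Monotonicity in $H$ is clear, since the left-hand side grows as $H$ decreases; so it suffices to fix $H=Ah_0$.

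I expect the monotonicity in $\beta$ to be the main obstacle, because the right-hand side $\ln\frac{\beta^2+\beta}{\beta^2+1}$ is \emph{not} monotone (it increases up to $\beta=1+\sqrt2$ and decreases thereafter), so a term-by-term comparison fails and the smaller left-hand side must be weighed against a possibly larger right-hand side. To handle this I would study $\Phi(\beta):=\frac1\beta e^{-Ah_0}-\ln\frac{\beta^2+\beta}{\beta^2+1}$ on $(0,\infty)$, checking that $\Phi(0^+)=+\infty$, that $\Phi(\beta)\to0^-$ as $\beta\to\infty$, and that $\Phi'$ vanishes only once, so that $\Phi$ decreases to a unique interior minimum before rising back to $0$ from below. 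Then $\{\Phi>0\}$ is an interval $(0,\beta^\ast)$, and \eqref{15}, which asserts $\Phi(b_0/a_0)>0$, forces $\Phi(\beta)>0$ for every $\beta\le b_0/a_0$; this is precisely \eqref{6}. With \eqref{6} in hand, Theorem~\ref{th3} gives exponential stability of \eqref{9}, so the fixed solution obeys $|x(t)|\le M\|\varphi\|e^{-\alpha(t-t_0)}\to0$. As $\varphi$ was an arbitrary admissible initial function, every solution of \eqref{14},\eqref{4} converges to zero.
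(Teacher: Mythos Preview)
Your linearization along a fixed solution and appeal to Theorem~\ref{th3} is exactly the paper's approach; the paper simply asserts that ``all conditions of Theorem~\ref{th3} are satisfied with $\beta=b_0/a_0$ and $Ah_0$ instead of $h_0$'' and moves on. You are right that this step deserves justification, since Theorem~\ref{th3} is stated with the actual limsups, not with arbitrary upper bounds. However, the calculus programme you outline is both harder than necessary and incomplete: the claim that $\Phi'$ vanishes exactly once is asserted, not proved, and it is not obvious (on $(1+\sqrt{2},\infty)$ one must compare $c/\beta^2$ with $-g'(\beta)$, which requires real work). A much shorter route is to trace the chain Theorem~\ref{th3} $\to$ Theorem~\ref{th2} $\to$ Theorem~\ref{th1}: the latter is formulated for \emph{any} upper bounds $b\ge b(t)$ and $h\ge t-h(t)$, so after the substitution $s=\int_0^t a(\tau)\,d\tau$ one may apply Theorem~\ref{th1} to the transformed equation with $a=1$, $b=b_0/a_0$, $h=Ah_0$, and condition~\eqref{2} then reads exactly as \eqref{15}. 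No monotonicity analysis of $\Phi$ is needed. Incidentally, your choice $a(t)=a_0$ on $\{x(t)=0\}$ is more careful than the paper's $a(t)=0$, since Theorem~\ref{th3} requires $a(t)\ge a_0>0$; and your preliminary choice of $t_0$ so that $h_k(t)\ge0$ is also a detail the paper omits.
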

\begin{proof}
Suppose $x$ is a solution of problem (\ref{14}),  (\ref{4}) with  $x_1^0\leq \varphi\leq x_2^0$.
Denote
$$
a(t)=\left\{\begin{array}{ll}
\frac{f(t,x(t))}{x(t)},& x(t)\neq 0,\\
0, & x(t)=0,
\end{array}\right.
$$$$
b_k(t)=\left\{\begin{array}{ll}
\frac{g_k(t,x(h_k(t)))}{x(h_k(t))},& x(h_k(t))\neq 0,\\
0, & x(h_k(t))=0,
\end{array}\right.
$$
then equation (\ref{14}) has form (\ref{9}). 
All conditions of Theorem~\ref{th3} are satisfied with $\displaystyle \beta=\frac{b_0}{a_0}$ and $Ah_0$ instead of $h_0$
in (\ref{10}), 
hence for any solution $y$ of equation  (\ref{9})
we have $\lim_{t\rightarrow\infty} y(t)=0$. Then $\lim_{t\rightarrow\infty} x(t)=0$.
\end{proof}

Consider now the nonlinear equation with a distributed delay
\begin{equation}\label{16}
\dot{x}(t)+f(t,x(t))+\sum_{k=1}^m  \int_{h_k(t)}^t g_k(t,x(s)) d_s R_k(t,s)=0,
\end{equation}
where conditions (a2),(a4),(a6) and (a7) hold, the initial function $\varphi$ is continuous.

\begin{guess}\label{th6}
Assume that for any $x_1\leq u\leq x_2, u\neq 0$ 
$$
a_0\leq \frac{f(t,u)}{u}\leq A,~ 0\leq \frac{g_k(t,u)}{u}\leq b_k.
$$ 
Assume also that $t-h_k(t)\leq h_0, b_0=\sum_{k=1}^m b_k$ and inequality (\ref{15}) holds.
Then the zero solution is an attractor of all solutions of problem (\ref{16}), (\ref{4}) with
the initial function satisfying  $x_1^0\leq \varphi\leq x_2^0$.
\end{guess}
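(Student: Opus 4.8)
The plan is to repeat the scheme of the proof of Theorem~\ref{th5}, but to reduce the problem to the linear \emph{distributed}-delay result Theorem~\ref{th4} rather than to Theorem~\ref{th3}. Let $x$ be a solution of problem~(\ref{16}),(\ref{4}) with $x_1^0\leq\varphi\leq x_2^0$; by assumption (a7) it exists on $[0,\infty)$ and satisfies $x_1\leq x(t)\leq x_2$, so along this solution the two-sided bounds $a_0\leq f(t,u)/u\leq A$ and $0\leq g_k(t,u)/u\leq b_k$ are available whenever the argument is nonzero.

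First I would freeze the coefficients along $x$. Put $a(t)=f(t,x(t))/x(t)$ where $x(t)\neq 0$, and $a(t)=a_0$ where $x(t)=0$; since $a(t)x(t)=f(t,x(t))$ holds regardless of the value assigned on the zero set of $x$ (both sides vanish there), this choice does not alter the equation and guarantees $a_0\leq a(t)\leq A$. For each delayed term I would write $g_k(t,x(s))=c_k(t,s)x(s)$ with $c_k(t,s)=g_k(t,x(s))/x(s)$ when $x(s)\neq 0$ and $c_k(t,s)=0$ otherwise, so that $0\leq c_k(t,s)\leq b_k$ and the identity holds for all $s$.

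The step that genuinely differs from Theorem~\ref{th5} is recasting the linearized distributed terms into the normalized shape~(\ref{11}) demanded by Theorem~\ref{th4}. Setting
$$
\tilde b_k(t)=\int_{h_k(t)}^t c_k(t,s)\,d_s R_k(t,s)\in[0,b_k],
$$
I would define, on the set where $\tilde b_k(t)>0$, a new measure by $d_s\tilde R_k(t,s)=c_k(t,s)\,d_sR_k(t,s)/\tilde b_k(t)$; it is nondecreasing in $s$ and normalized, $\int_{h_k(t)}^t d_s\tilde R_k(t,s)=1$, so (a4) is restored, while on the set $\tilde b_k(t)=0$ the whole $k$-th term vanishes and $\tilde R_k$ may be chosen as a unit mass at $t$. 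With these data $x$ solves
$$
\dot x(t)+a(t)x(t)+\sum_{k=1}^m\tilde b_k(t)\int_{h_k(t)}^t x(s)\,d_s\tilde R_k(t,s)=0,
$$
which is exactly equation~(\ref{11}). I expect the main obstacle to be verifying that this renormalization keeps the coefficients $\tilde b_k$ and the kernels $\tilde R_k$ Lebesgue measurable in $t$ and handling the degeneracy set $\{\tilde b_k(t)=0\}$; this is, however, entirely parallel to the measurability arguments already carried out in the proofs of Theorems~\ref{th3} and~\ref{th4}.

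It then remains to check the hypotheses of Theorem~\ref{th4} for this linear equation, exactly as in Theorem~\ref{th5}. Since $\sum_k\tilde b_k(t)\leq\sum_k b_k=b_0$ and $a(t)\geq a_0$, one has $\limsup_{t\to\infty}\big(\sum_k\tilde b_k(t)\big)/a(t)\leq b_0/a_0$, and since $a(t)\leq A$ with $t-h_k(t)\leq h_0$, the quantity in~(\ref{10}) is at most $Ah_0$. Thus Theorem~\ref{th4} applies with $\beta=b_0/a_0$ and effective delay bound $Ah_0$, and inequality~(\ref{15}) is precisely condition~(\ref{6}) evaluated at these bounds; because the reduction behind Theorem~\ref{th4} rests ultimately on Theorem~\ref{th1}, which is formulated through the upper bounds $b_0$, $A$, $h_0$, the single inequality~(\ref{15}) suffices. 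Theorem~\ref{th4} then gives exponential stability of the constructed linear equation, and as $x$ is one of its solutions, $\lim_{t\to\infty}x(t)=0$, which is the desired attractivity.
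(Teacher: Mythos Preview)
Your proposal is correct and follows exactly the route the paper indicates: linearize along the fixed solution and invoke Theorem~\ref{th4} with $\beta=b_0/a_0$ and $Ah_0$ in place of $h_0$, just as Theorem~\ref{th5} invokes Theorem~\ref{th3}. The renormalization step---absorbing $c_k(t,s)$ into a new coefficient $\tilde b_k(t)$ and a new normalized kernel $\tilde R_k$---is precisely the extra detail that the distributed-delay setting demands and that the paper leaves implicit in its one-line ``similar to the proof of Theorem~\ref{th5}''; your choice $a(t)=a_0$ on the zero set of $x$ is in fact slightly cleaner than the paper's choice of $0$ in Theorem~\ref{th5}, since it preserves the lower bound $a(t)\geq a_0$ needed downstream.
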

The proof applies Theorem~\ref{th4} and is similar to the proof of Theorem~\ref{th5}.

\begin{remark} 
Nonlinear integro-differential equations, mixed differential equations
with concentrated delay and integral terms are partial cases of equation (\ref{16}).
\end{remark}

\section{Mackey-Glass Model of Respiratory Dynamics}

As an  application we consider the Mackey-Glass model of respiratory dynamics (for review and recent results see \cite{BBI})
\begin{equation}\label{17}
\dot{x}(t)= r(t)\left[\alpha-\frac{\beta x(t)x^{n}(h(t))}{1 +x^{n}(h(t))}\right],
\end{equation}
where $\alpha>0, \beta>0$ and $n>0$ are positive constants, $R\geq r(t)\geq r_0>0$ is a Lebesgue measurable function,
$h(t)\leq t$ is a measurable delay function, $t-h(t)\leq h_0$.
Equation (\ref{17}) has a nontrivial equilibrium $K$, where $K$ is a unique positive solution
determined by the equation 
\begin{equation}\label{18}
\beta K^{n+1} =\alpha (1+K^{n}).
\end{equation}

\begin{uess}\label{lemma1}\cite[Lemma 3.1]{BBI}
For any $\varphi(t)\geq 0$,  $\varphi(0)>0$, problem (\ref{17}), (\ref{4}) has a unique global positive 
solution. 

For any $\varepsilon>0$ there exists sufficiently large $t_1$ such that for $t\geq t_1$ the solution satisfies 
$\mu_{\varepsilon}\leq x(t)\leq M_{\varepsilon}$, where
 \begin{equation}
\label{19}
\mu_{\varepsilon}=\frac{\alpha}{\beta}-\varepsilon, 
~~M_{\varepsilon}=\frac{\alpha}{\beta}\left[1+\left(\frac{\beta}{\alpha}\right)^n\right]+\varepsilon.
\end{equation}
\end{uess}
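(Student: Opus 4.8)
The plan is to split the statement into three parts---global existence and uniqueness, positivity, and the asymptotic two-sided bound---and to handle the bounds by reducing the delay equation to scalar non-delay comparison ODEs. First I would rewrite \eqref{17} along a solution in the form
$$
\dot{x}(t) + P(t)x(t) = r(t)\alpha, \qquad P(t) := r(t)\beta\frac{x^n(h(t))}{1+x^n(h(t))} \geq 0.
$$
Here $P$ is a nonnegative, locally bounded measurable function, and the right-hand side of \eqref{17} is affine in the non-delayed value $x(t)$ and locally Lipschitz, so the method of steps yields a unique local solution. The variation-of-constants representation
$$
x(t) = \varphi(0)\,e^{-\int_0^t P(s)\,ds} + \alpha\int_0^t r(s)\,e^{-\int_s^t P(\tau)\,d\tau}\,ds
$$
immediately gives $x(t)>0$ on the maximal interval, since $\varphi(0)>0$ and the integrand is strictly positive; the a priori upper bound established below rules out finite-time blow-up, so the solution is global.

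For the lower bound I would use the elementary estimate $0<\frac{u^n}{1+u^n}<1$ for $u>0$, which gives $\dot{x}(t) > r(t)\bigl(\alpha-\beta x(t)\bigr)$. Comparing $x$ with the solution $z$ of the non-delay linear ODE $\dot{z}=r(t)(\alpha-\beta z)$, $z(t_0)=x(t_0)$, a Gronwall-type argument applied to $w=x-z$ (which satisfies $\dot{w}+r(t)\beta w>0$, $w(t_0)=0$) shows $x(t)\geq z(t)$ for $t\geq t_0$; since $r(t)\geq r_0>0$ forces $\int_0^t r\beta\to\infty$, we have $z(t)\to\alpha/\beta$, hence $\liminf_{t\to\infty}x(t)\geq\alpha/\beta$. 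In particular, for every $\varepsilon'>0$ there is $t_1$ with $x(t)\geq\mu_{\varepsilon'}=\alpha/\beta-\varepsilon'$ for $t\geq t_1$.

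The upper bound then feeds the lower bound through the delay. Since $t-h(t)\leq h_0$, for $t\geq t_1+h_0$ we have $h(t)\geq t_1$, so $x(h(t))\geq\mu_{\varepsilon'}$, and by monotonicity of $u\mapsto\frac{u^n}{1+u^n}$,
$$
\frac{x^n(h(t))}{1+x^n(h(t))}\geq c_{\varepsilon'}:=\frac{\mu_{\varepsilon'}^{\,n}}{1+\mu_{\varepsilon'}^{\,n}}>0.
$$
Using $x(t)>0$, this yields $\dot{x}(t)\leq r(t)\bigl(\alpha-\beta c_{\varepsilon'} x(t)\bigr)$, and comparison with $\dot{w}=r(t)(\alpha-\beta c_{\varepsilon'}w)$ gives $\limsup_{t\to\infty}x(t)\leq\frac{\alpha}{\beta c_{\varepsilon'}}=\frac{\alpha}{\beta}\bigl(1+\mu_{\varepsilon'}^{\,-n}\bigr)$. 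As $\varepsilon'\to0^+$ the right-hand side tends to $\frac{\alpha}{\beta}\bigl[1+(\beta/\alpha)^n\bigr]$, so given the target $\varepsilon$ I would choose $\varepsilon'$ small enough that this limsup falls below $M_\varepsilon$, producing the stated two-sided bound for all large $t$.

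The main obstacle I anticipate is the bookkeeping that links the two comparisons: the upper estimate is valid only once the delayed argument $h(t)$ has entered the region where the lower bound already holds, which costs the extra lag $h_0$ and forces a careful ordering of the thresholds and of the $\varepsilon$-quantifiers. The comparison principle itself is routine precisely because both the $<1$ and $\geq c_{\varepsilon'}$ estimates collapse the delayed nonlinearity into constant coefficients, turning each step into a scalar non-delay ODE to which a Gronwall argument applies directly.
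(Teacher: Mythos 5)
The paper itself does not prove this lemma: it is imported verbatim, with the citation \cite[Lemma 3.1]{BBI}, so there is no in-paper argument to compare against; your proof stands as a self-contained substitute, and it is essentially correct. Your three-step scheme --- (i) rewrite (\ref{17}) as $\dot x(t) + P(t)x(t) = \alpha r(t)$ with $P \geq 0$ and read positivity off the variation-of-constants formula, (ii) use $u^n/(1+u^n) < 1$ to compare from below with $\dot z = r(t)(\alpha - \beta z)$ and obtain $\liminf_{t\to\infty} x(t) \geq \alpha/\beta$, (iii) push that persistence bound through the delay (at the cost of the lag $h_0$) to get $\limsup_{t\to\infty} x(t) \leq \frac{\alpha}{\beta}\bigl(1+\mu_{\varepsilon'}^{-n}\bigr)$ and then let $\varepsilon' \to 0^+$ --- is sound, each comparison has the right sign, and the limit computation does recover the constants $\mu_\varepsilon$, $M_\varepsilon$ of (\ref{19}).

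Two points need tightening. First, your justification of global existence is circular as written: you appeal to ``the a priori upper bound established below,'' but both bounds established below are asymptotic statements (a $\liminf$ and a $\limsup$ as $t\to\infty$) that already presuppose the solution exists on $[0,\infty)$; moreover the upper one is only derived for $t \geq t_1 + h_0$. The repair is one line and is already contained in your displayed formula: since $P \geq 0$, the variation-of-constants representation gives $x(t) \leq \varphi(0) + \alpha\int_0^t r(s)\,ds \leq \varphi(0) + \alpha R t$ on the maximal interval of existence, i.e. at most linear growth, which together with $x(t)>0$ rules out finite-time escape. Second, ``the method of steps'' does not literally apply, because the delay is only assumed to satisfy $h(t) \leq t$, $t-h(t)\leq h_0$, and need not be bounded away from zero; also, for $0<n<1$ the map $u \mapsto u^n/(1+u^n)$ fails to be Lipschitz at $u=0$, where the initial function $\varphi \geq 0$ is allowed to vanish. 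Both issues are handled by standard Carath\'eodory-type existence and uniqueness theory for functional differential equations, combined with the observation that on a short initial interval the solution remains near $\varphi(0)>0$, so every delayed value of the \emph{unknown} part of the solution lies in a region where the nonlinearity is Lipschitz, while delayed values with $h(t)\leq 0$ are prescribed by $\varphi$ and create no uniqueness problem. With these two repairs your argument is complete.
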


After the substitution $y(t)=\ln\frac{x(t)}{K}$ equation (\ref{17}) has the form
\begin{equation}\label{20}
\dot{y}(t)+r(t)\frac{\alpha}{K}\left( 1-e^{-y(t)} \right) +\beta K^n  r(t)
\left(\frac{1}{1+K^ne^{-n y(h(t))}}-\frac{1}{1+ K^n}\right)=0.
\end{equation}

\begin{uess}\cite[Theorem 3.3]{BBI}\label{lemma2} 
For any $\varepsilon>0$ and sufficiently large $t$, for any solution $y$ of problem (\ref{20}),(\ref{4}),
the inequality   
$c_{\varepsilon}\leq y(t)\leq C_{\varepsilon}$ is satisfied, where
\begin{equation}
\label{21}
c_{\varepsilon}= \ln\frac{\mu_{\varepsilon}}{K}, C_{\varepsilon}=\ln\frac{M_{\varepsilon}}{K},
\end{equation}
and $\mu_{\varepsilon}, M_{\varepsilon}$ are denoted by (\ref{19}).
 \end{uess}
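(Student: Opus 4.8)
The plan is to obtain Lemma~\ref{lemma2} as an immediate consequence of Lemma~\ref{lemma1}, using only that the substitution $y=\ln(x/K)$ defining (\ref{20}) is a strictly increasing bijection of $(0,\infty)$ onto $\mathbb{R}$. Since $K>0$, the map $x\mapsto \ln(x/K)$ preserves order, so a two-sided bound on a positive solution $x$ of (\ref{17}) transfers verbatim to the associated solution $y$ of (\ref{20}), with the endpoints transformed exactly by the logarithm as recorded in (\ref{21}).

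Concretely, I would argue as follows. First, note that $y$ solves (\ref{20}),(\ref{4}) if and only if $x(t)=Ke^{y(t)}$ solves (\ref{17}) with a corresponding positive initial function; in particular $x$ is a positive global solution, so Lemma~\ref{lemma1} applies to it. Next, fix $\varepsilon>0$ small enough that $\mu_\varepsilon=\alpha/\beta-\varepsilon>0$ (the regime of interest, since $\varepsilon$ is taken arbitrarily small, ensuring that $c_\varepsilon=\ln(\mu_\varepsilon/K)$ is finite). By Lemma~\ref{lemma1} there is $t_1$ with $\mu_\varepsilon\leq x(t)\leq M_\varepsilon$ for all $t\geq t_1$. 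Applying the increasing function $\ln(\cdot/K)$ to this chain and using $\mu_\varepsilon>0$ gives $\ln(\mu_\varepsilon/K)\leq \ln(x(t)/K)\leq \ln(M_\varepsilon/K)$, that is, $c_\varepsilon\leq y(t)\leq C_\varepsilon$ for $t\geq t_1$, which is exactly the assertion.

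I do not expect any genuine obstacle: the whole argument reduces to monotonicity of the logarithm applied to the a priori bounds already furnished by Lemma~\ref{lemma1}. The only points deserving a line of care are (i) verifying that $x=Ke^{y}$ is a true equivalence between the two initial value problems, so that the positivity and eventual boundedness of $x$ are legitimately available for the transformed equation, and (ii) ensuring $\mu_\varepsilon>0$ so that $c_\varepsilon$ is well defined, which merely restricts attention to sufficiently small $\varepsilon$. Neither step requires any estimate beyond the order-preserving nature of the substitution.
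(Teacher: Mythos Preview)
Your argument is correct: Lemma~\ref{lemma2} is indeed an immediate corollary of Lemma~\ref{lemma1} once one notes that the substitution $y=\ln(x/K)$ is order-preserving, so the eventual bounds $\mu_\varepsilon\le x(t)\le M_\varepsilon$ translate directly into $c_\varepsilon\le y(t)\le C_\varepsilon$. The paper itself does not supply a proof of this lemma at all; it simply imports the statement as \cite[Theorem~3.3]{BBI}, so there is no in-paper argument to compare against. Your derivation is exactly the natural one and requires nothing beyond what you wrote; the two caveats you flag (the equivalence of the initial value problems under $x=Ke^{y}$, and the restriction to $\varepsilon<\alpha/\beta$ so that $\mu_\varepsilon>0$) are the only points needing a sentence, and you handle both.
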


Denote 
$$
 \mu=\frac{\alpha}{\beta}, ~~M=\frac{\alpha}{\beta}\left[1+\left(\frac{\beta}{\alpha}\right)^n\right],
~~ c= \ln\frac{\mu}{K}, ~~ C=\ln\frac{M}{K}.
$$

\begin{guess}\label{th7}
Suppose $t-h(t)\leq h_0$, $0<r_0 \leq r(t)<R$ and inequality~(\ref{15}) holds, where
$$
a_0=\frac{\alpha}{K}\frac{1-e^{-C}}{C}r_0,~~ A=\frac{\alpha}{K}\frac{1-e^{-c}}{c}R,~~ 
b_0=\frac{\beta n R}{4}.
$$
Then $K$ is a global attractor for all solutions of problem (\ref{17}), (\ref{4})
with $\varphi(t) \geq 0$, $\varphi(0)>0$.
\end{guess}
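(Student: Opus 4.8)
The plan is to reduce Theorem~\ref{th7} to the nonlinear attractivity result already established in Theorem~\ref{th6} by applying it to the transformed equation~(\ref{20}). The substitution $y(t)=\ln\frac{x(t)}{K}$ has already put the Mackey-Glass model into the form~(\ref{16}) with a single delayed term ($m=1$), where the non-delay nonlinearity is $f(t,y)=r(t)\frac{\alpha}{K}\left(1-e^{-y}\right)$ and the delayed nonlinearity is $g(t,y)=\beta K^n r(t)\left(\frac{1}{1+K^n e^{-ny}}-\frac{1}{1+K^n}\right)$. Note that $K$ being an attractor for~(\ref{17}) is equivalent to $y\equiv 0$ being an attractor for~(\ref{20}), since the logarithm is a homeomorphism fixing the equilibrium. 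So it suffices to verify that $f$ and $g$ satisfy hypotheses (a6), (a7) and the sector bounds required by Theorem~\ref{th6}, with the stated constants $a_0$, $A$, $b_0$.

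First I would check the structural conditions. Both $f(t,0)=0$ and $g(t,0)=0$ hold by direct substitution, and $f(t,y)y>0$, $g(t,y)y>0$ for $y\neq 0$ follow from monotonicity of $1-e^{-y}$ and of $\frac{1}{1+K^n e^{-ny}}$ together with $r(t)>0$, so (a6) holds. For (a7) I would invoke Lemma~\ref{lemma1} and Lemma~\ref{lemma2}: they guarantee a unique global solution remaining eventually in $[\mu_\varepsilon,M_\varepsilon]$, hence $y(t)$ eventually lies in $[c_\varepsilon,C_\varepsilon]$; taking $\varepsilon\to 0$ identifies the invariant band as (essentially) $[c,C]$, which supplies the interval $[x_1,x_2]$ for~(\ref{16}).

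The substantive step is estimating the sector ratios $\frac{f(t,y)}{y}$ and $\frac{g(t,y)}{y}$ over $y\in[c,C]$ to produce the constants $a_0$, $A$, $b_0$. For $f$, the ratio is $r(t)\frac{\alpha}{K}\frac{1-e^{-y}}{y}$; since $\frac{1-e^{-y}}{y}$ is a decreasing function of $y$, its extreme values on $[c,C]$ occur at the endpoints, giving a lower bound $\frac{\alpha}{K}\frac{1-e^{-C}}{C}r_0$ and an upper bound $\frac{\alpha}{K}\frac{1-e^{-c}}{c}R$, which are exactly the stated $a_0$ and $A$ after pairing the monotone factor with $r_0\leq r(t)<R$. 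For $g$, the ratio $\frac{g(t,y)}{y}$ is nonnegative, and I would bound it above by $b_0=\frac{\beta n R}{4}$. This last bound is the part I expect to be the main obstacle: one must show that $\frac{1}{y}\left(\frac{1}{1+K^n e^{-ny}}-\frac{1}{1+K^n}\right)$ is controlled so that $\beta K^n r(t)$ times it does not exceed $\frac{\beta n R}{4}$. The natural route is to recognize $\frac{1}{1+K^n e^{-ny}}=\sigma(ny+\ln K^n)$ for the logistic function $\sigma(u)=\frac{1}{1+e^{-u}}$, whose derivative satisfies $\sigma'(u)=\sigma(u)(1-\sigma(u))\leq \frac14$; by the mean value theorem the difference quotient in $y$ is then at most $\frac{n}{4}$, and multiplying by $\beta K^n$ — with the $K^n$ absorbed correctly through the logistic reparametrization so that the $\frac14$ bound is uniform — yields $\frac{\beta n}{4}$, and then $R$ from $r(t)<R$.

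Once these three sector bounds are in hand, inequality~(\ref{15}) is precisely the hypothesis of Theorem~\ref{th6} with these very $a_0$, $A$, $b_0$ and $h_0$. Applying that theorem gives that $y\equiv 0$ attracts all admissible solutions of~(\ref{20}), and transforming back via $x=Ke^{y}$ shows that $K$ is a global attractor of~(\ref{17}) for all $\varphi(t)\geq 0$ with $\varphi(0)>0$, completing the proof.
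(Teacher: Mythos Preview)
Your proposal is correct and follows essentially the same route as the paper: pass to equation~(\ref{20}), identify $f$ and $g$, use Lemmas~\ref{lemma1}--\ref{lemma2} for the invariant band, verify the sector estimates (the paper simply cites \cite[proof of Theorem~5.4]{BBI} for these, whereas you sketch them via monotonicity of $(1-e^{-y})/y$ and the logistic bound $\sigma'\le 1/4$), and then invoke Theorem~\ref{th6}. The one place the paper is a bit more careful is the $\varepsilon$ step: rather than ``taking $\varepsilon\to 0$'' to reach $[c,C]$, it fixes a single $\varepsilon>0$ small enough---using strictness of~(\ref{15}) and continuity in the parameters---so that the inequality still holds with the perturbed constants $a_\varepsilon,A_\varepsilon$ on the actual invariant interval $[c_\varepsilon,C_\varepsilon]$, which is where the solution is guaranteed to live.
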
 
\begin{proof}
It is sufficient to prove that $y(t)=0$ is a global attractor for all solutions of  problem (\ref{20}),(\ref{4}).
By Lemma~\ref{lemma2}, there exist $\varepsilon>0$ and $t_1\geq 0$ such that the solution of problem 
(\ref{20}),(\ref{4})
satisfies  $c_{\varepsilon}\leq y(t)\leq C_{\varepsilon}$ for $t\geq t_1$, and 
inequality (\ref{15}) holds if $a_0,A$
are changed by 
$$
a_{\varepsilon}=\frac{\alpha}{K}\frac{1-e^{-C_{\varepsilon}}}{C_{\varepsilon}}r_0, ~~
A_{\varepsilon}=\frac{\alpha}{K}\frac{1-e^{-c_{\varepsilon}}}{c_{\varepsilon}}, 
$$
respectively, where $c_{\varepsilon}, C_{\varepsilon}$ are denoted by (\ref{21}).

Equation (\ref{20}) has form (\ref{14}) for $m=1$ with
$$
f(t,x)=r(t)\frac{\alpha}{K}(1-e^{-x}),~~g(t,x)=\beta K^n  r(t)
\left(\frac{1}{1+K^n e^{-n x}}-\frac{1}{1+ K^n}\right).
$$
In \cite[the proof of Theorem 5.4]{BBI} for these functions the following inequalities were justified:
$$
a_{\varepsilon}\leq \frac{f(t,u)}{u}\leq A_{\varepsilon},~ 0\leq \frac{g(t,u)}{u}\leq b_0.
$$
By Theorem~\ref{th6}, the zero solution is a global attractor for all solutions of  problem (\ref{20}), 
(\ref{4}).
\end{proof}

\begin{example}
\label{example2} 
Consider equation (\ref{17}) with $K=1.5$, $\alpha=1$, $\beta = 0.5$,
$n=4$, $r(t) = 2.7+0.3 \sin t$, $t-h(t) \leq h_0$.
   
To apply Theorem~\ref{th7}, we compute $R=3$, $r_0=2.4$,
$\mu=2$, $M =2.125$, $c \approx 0.28768$, $C \approx 0.34831$,
$a_0\approx 1.35107$, $b_0 = 1.5$, $A \approx 1.73803$
and obtain that $K$ is a global attractor if $h_0<1.68$. 

For comparison, \cite[Theorem 5.4]{BBI} gives the condition
${\displaystyle 
\frac{\beta h_0 n R}{4}<1+\frac{1}{e}} $
for the global attractivity of $K$ which leads to the estimate $h_0<0.91$.
The results of \cite[Corollary 4]{ILT2} cannot be applied since the coefficients are variable.
\end{example}

\section{Discussion}

Everywhere above for linear equations 
\begin{equation}\label{1_disc}
\dot{x}(t)+a(t)x(t)+b(t)x(h(t))=0
\end{equation}
we assumed a positive lower bound $a(t) \geq a_0>0$ for the 
coefficient of the non-delay term.
Moreover, if the results of the present paper imply stability for a 
certain bound $a_0$, they would also yield that the equation is stable for any greater lower bound.
However, Example~\ref{example1} demonstrated that in a stable equation with a single delay 
term (\ref{intro4}) which has a positive variable coefficient,  the introduction of a 
non-delay term with a nonnegative (or even positive)
coefficient as in (\ref{intro3}) may destroy its stability.

Let us note that the condition
\begin{equation}
\label{2_disc}
\int_{h(t)}^t b(s)~ds <\frac{1}{e}
\end{equation}
guarantees that (\ref{intro4}) is stable and also that (\ref{1_disc}) is stable for any $a(t)\geq 0$ as 
(\ref{2_disc}) implies nonoscillation (and thus stability) of (\ref{intro4}).
In fact, denoting $\displaystyle z(t) = x(t) \exp\left\{ \int_0^t a(s)~ds \right\}$, we can rewrite (\ref{1_disc}) 
as 
\begin{equation}
\label{3_disc}
\dot{x}(t)+ r(t)z(h(t))=0, ~~r(t)=b(t)e^{-\int_{h(t)}^t a(s)~ds},
\end{equation}
where nonoscillation of $z$ is equivalent to nonoscillation of $x$.
For any $a(t) \geq 0$, equation (\ref{3_disc}) is nonoscillatory as (\ref{2_disc}) implies
\begin{equation}
\label{4_disc}
\int_{h(t)}^t r(s)~ds <\frac{1}{e},
\end{equation}
thus (\ref{1_disc}) is stable (and even nonoscillatory). The possibility to destabilize
oscillatory solutions was illustrated in Example~\ref{example1}. However,
it is still an open problem whether some other conditions
which would guarantee that stability of (\ref{intro4}) implies stability of (\ref{intro3}) can be established,
where (\ref{2_disc}) does not hold, and the inequality $0<b(t)<\lambda a(t)$ is not satisfied for any $0<\lambda<1$
(the latter inequality would imply stability \cite{HaleLunel}).


In the present paper, global attractivity of the trivial equilibrium for 
nonlinear equations of form  (\ref{14}) was
considered, where 
$f(t,0)=g_k(t,0)=0$, $f(t,u)u>0$, and $g_k(t,u)u>0$ for $u\neq 0$. 
Such equations are obtained from a given mathematical model after the substitution $x=y+K$,
where $K$ is a positive equilibrium or a positive periodic/almost periodic solution.

However, in (\ref{14}) every term in the sum contains only one delay. 
It would be interesting to extend global stability results obtained 
here to more general equations, for example, of the form
$$
\dot{x}(t)+f(t,x(t))+\sum_{k=1}^m g_k(t,  x(h_1(t)),\dots, x(h_l(t)))=0.
$$
\vspace{2mm}

\centerline{\bf Acknowledgment}

The first author was partially supported by Israeli Ministry of
Absorption. The second author was
partially supported by the NSERC Discovery Grant.

\end{document}